\date{}
\newcommand{\Lim}[1]{\raisebox{0.5ex}{\scalebox{0.8}{$\displaystyle \lim_{#1}\;$}}}
\newtheorem{theorem}{Theorem}[section]
\newtheorem{lemma}[theorem]{Lemma}
\newtheorem{definition}[theorem]{Definition}
\begin{document}

\title{Universal partial sums of Taylor series as functions of the centre of expansion}

\date{}

\author{Christoforos Panagiotis \thanks{Supported by the European Research Council (ERC) under the European Union's Horizon 2020 research and innovation programme (grant agreement No 639046).}}

\affil{{Mathematics Institute\\University of Warwick\\CV4 7AL, UK\\ 
Email \href{mailto:C.Panagiotis@warwick.ac.uk}{C.Panagiotis@warwick.ac.uk} }}

\maketitle

\begin{abstract}
V. Nestoridis conjectured that if $\Omega$ is a simply connected subset of $\mathbb{C}$ that does not 
contain $0$ and $S(\Omega)$ is the set of all functions $f\in \mathcal{H}(\Omega)$ with the property that the set 
$\left\{T_N(f)(z)\coloneqq\sum_{n=0}^N\dfrac{f^{(n)}(z)}{n!} (-z)^n : N = 0,1,2,\dots \right\}$ is dense in $\mathcal{H}(\Omega)$, then $S(\Omega)$ is a dense 
$G_\delta$ set in $\mathcal{H}(\Omega)$. We answer the conjecture in the affirmative in the special case where
$\Omega$ is an open disc $D(z_0,r)$ that does not contain $0$.
\end{abstract}
\vspace{5pt}
\begin{flushleft} AMS Classification numbers: primary 30K99, secondary 30K05
  \end{flushleft}
\vspace{5pt}
\begin{flushleft}
  Key words and phrases: Taylor Expansion, Partial Sums, Universal Taylor Series, Ostrowski-gaps, Baire's Category Theorem, Generic Property.
\end{flushleft}

\section{Introduction}
Let $\Omega$ be a simply connected domain in the complex plane and consider the space $\mathcal{H}(\Omega)$ of all 
the functions that are holomorphic on $\Omega$ with the topology of uniform convergence on compacta. If $f$ 
belongs to $\mathcal{H}(\Omega)$ and $\zeta$ is a point in $\Omega$, then $S_N(f,\zeta)(z)$ denotes the $N$-th 
partial sum of the Taylor expansion of $f$ with centre $\zeta$ at $z$, i.e. 
\begin{align*}
S_N(f,\zeta)(z)=\sum_{n=0}^N\dfrac{f^{(n)}(\zeta)}{n!} (z-\zeta)^n.
\end{align*} 
The last expression converges to $f(z)$ as $N\rightarrow \infty$ whenever $z\in D(\zeta,r)$ for some sufficiently small $r>0$, such that $D(\zeta,r)\subset \Omega$. This expression makes also sense when $\zeta\in\Omega$ and $z\in \Omega^{\mathsf{c}}$, but now $S_N(f,\zeta)(z)$ may or may not have a limit. A natural question is what kind of behaviour can the partial sums exhibit in the later case.

Overconvergence is the phenomenon of convergence of a certain subsequence of partial sums on a domain that is larger than the domain of convergence of the series. It turns out that there exist power series which satisfy this definition to its extreme in the following sense.

\begin{theorem}
Let $0\leq r<\infty$. There exists a power series $\sum_{n=0}^\infty a_n z^n$ of
radius of convergence $r$ such that for every compact set $K$ in $\{z : |z| > r\}$ with
connected complement and every function $h$ that is continuous on $K$ and holomorphic on the interior of $K$ there exists an increasing sequence $\{\lambda_n\}\in\{0, 1, 2, \dots\}$ such that
\begin{align*}
\sum_{k=0}^{\lambda_n} a_k z^k \rightarrow h(z)
\end{align*}
as $k\rightarrow \infty$
uniformly on $K$.
\end{theorem}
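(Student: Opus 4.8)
The plan is to prove the formally stronger statement that the power series with the stated universality property form a dense $G_\delta$ subset of a suitable Baire space, and then to deduce that each such series has radius of convergence exactly $r$. The ambient space I would use is the Fr\'echet space $\mathcal H(D(0,r))$ --- holomorphic functions on the open disc, with uniform convergence on compacta --- when $r>0$, and the space $\mathbb C^{\mathbb N}$ of coefficient sequences $(a_n)_{n\ge0}$ with the product topology when $r=0$; both are completely metrizable, hence Baire. In either case the coefficients $a_0,\dots,a_N$, and hence the partial sum $S_N(z)=\sum_{n=0}^{N}a_nz^n$, depend continuously on the ambient point (via Cauchy's integral formula when $r>0$, trivially when $r=0$), so that $f\mapsto\sup_{z\in K}|S_N(z)-p(z)|$ is continuous for every compact $K$ and every polynomial $p$. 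This continuity is what makes the relevant sets open.

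The preliminary step is a standard exhaustion lemma: there is a countable family $\{K_m\}$ of compact subsets of $\{z:|z|>r\}$, each with connected complement, such that every compact $K\subseteq\{z:|z|>r\}$ with connected complement lies in some $K_m$. One may take each $K_m$ of the form $\overline{D(0,M)}\cap\{|z|\ge r'\}\setminus N_\delta(\gamma)$, where $r<r'<M$ and $\delta>0$ are rational and $\gamma$ is a polygonal arc with rational vertices running from $0$ to a point of modulus exceeding $M$, with $N_\delta(\gamma)$ its open $\delta$-neighbourhood: such a set has connected complement because the removed ``channel'' $N_\delta(\gamma)$ links the inner disc $\{|z|<r'\}$ to the exterior $\{|z|>M\}$; and conversely, given $K$ as above, connectedness of the open set $\mathbb C\setminus K$ provides a path in it from $0$ to a point of large modulus, whose polygonal approximation with rational data furnishes admissible $\gamma,\delta,M,r'$ with $K\subseteq K_m$. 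Using Mergelyan's theorem on $K$ to approximate a prescribed $h$ by polynomials with coefficients in $\mathbb Q+i\mathbb Q$, and the inclusions $K\subseteq K_m$ to reduce to estimating errors on the $K_m$ only, the universality property becomes equivalent to membership in the countable intersection $U=\bigcap_{m,p,s,t}E(m,p,s,t)$, where $p$ runs over the rational polynomials, $s,t\in\mathbb N$, and
\[
E(m,p,s,t)=\Bigl\{\,f\ :\ \exists\,\lambda>t\ \text{ with }\ \sup_{z\in K_m}|S_\lambda(z)-p(z)|<1/s\,\Bigr\}.
\]
Each $E(m,p,s,t)$ is open by the continuity above, so by Baire's category theorem it remains only to prove that each is dense.

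This density statement is the core of the argument. Fix a point $f_0$ of the ambient space and a basic neighbourhood of it: when $r>0$ this is $\{f:\|f-f_0\|_{\overline{D(0,\rho)}}<\varepsilon\}$ for some $0<\rho<r$, and when $r=0$ it prescribes the coefficients $a_0,\dots,a_N$ to within $\varepsilon$. Suppose $r>0$. Then $L:=\overline{D(0,\rho)}\cup K_m$ is compact with connected complement --- the closed disc lies in the open set $\mathbb C\setminus K_m$ and may be circumvented by paths there, so removing it does not disconnect $\mathbb C\setminus K_m$ --- and the function equal to $f_0$ on $\overline{D(0,\rho)}$ and to $p$ on $K_m$ is continuous on $L$ and holomorphic on its interior. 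Mergelyan's theorem then produces a polynomial $P$ with $\|P-f_0\|_{\overline{D(0,\rho)}}<\varepsilon$ and $\sup_{K_m}|P-p|<1/s$; since $S_\lambda=P$ for every $\lambda\ge\deg P$, choosing $\lambda\ge\deg P$ with $\lambda>t$ exhibits $P$ as an element of $E(m,p,s,t)$ lying in the neighbourhood of $f_0$. When $r=0$ one proceeds in the same spirit, seeking a partial sum $q_0(z)+z^{N+1}\tilde q(z)$ with $q_0$ the polynomial carrying the prescribed coefficients $a_0,\dots,a_N$ and $\tilde q$ a polynomial chosen --- by Mergelyan's, or even Runge's, theorem, which applies because $0\notin K_m$ and hence $(p-q_0)/z^{N+1}$ is holomorphic near $K_m$ --- so that $z^{N+1}\tilde q(z)$ approximates $p(z)-q_0(z)$ to within $1/s$ on $K_m$; the resulting polynomial has the prescribed low-order coefficients and belongs to $E(m,p,s,t)$.

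Hence $U$ is a dense $G_\delta$, so in particular nonempty. Unwinding the definitions, any $f\in U$ satisfies: for every compact $K\subseteq\{|z|>r\}$ with connected complement and every $h$ continuous on $K$ and holomorphic on its interior, there is an increasing sequence $\lambda_n$ with $S_{\lambda_n}\to h$ uniformly on $K$ (approximate $h$ on $K$ by rational polynomials, embed $K$ in a suitable $K_m$, and choose the indices increasing). Finally, the power series of such an $f$ has radius of convergence exactly $r$: it is $\ge r$ (trivially when $r=0$, and because $f\in\mathcal H(D(0,r))$ when $r>0$), and if it were some $R>r$ the series would sum to a function $g$ holomorphic on $D(0,R)$ whose full sequence of partial sums converges to $g$ uniformly on a small closed disc $K\subseteq\{r<|z|<R\}$, leaving no subsequence free to converge on $K$ to $g|_K+1$ --- contradicting the property just established. (In particular the series is not a polynomial, since a polynomial has infinite radius of convergence.) I expect the exhaustion lemma to be the principal obstacle --- producing a countable cofinal family of compact sets with connected complement inside $\{|z|>r\}$ --- together with the recurring need to check, at each application of Mergelyan's theorem, that the relevant union of compacta still has connected complement.
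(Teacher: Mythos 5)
The paper does not actually prove this theorem: it is quoted as background and attributed to Seleznev ($r=0$) and to Luh and Chui--Parnes (general $r$), whose original arguments are constructive (an explicit enumeration of rational polynomials and compact sets, with the coefficients built block by block so that successive partial sums realize the successive approximations). Your proposal is correct, but it takes the modern generic route instead: you prove the formally stronger statement that the universal series form a dense $G_\delta$ in $\mathcal H(D(0,r))$ (resp.\ $\mathbb C^{\mathbb N}$ when $r=0$) and then recover the exact radius of convergence a posteriori. This is in fact the same Baire-category template the paper uses for its own main result (open sets $A(m,j,s)$, density via a polynomial correction, Baire's theorem), so your proof is methodologically closer to the paper than to the cited references. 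The two genuinely delicate points are exactly the ones you flag: the countable cofinal family $\{K_m\}$ of compact sets with connected complement (your polygonal-channel construction is the standard one and works --- the channel $N_\delta(\gamma)$ joins the inner disc to the exterior, and conversely a path in the connected open set $\mathbb C\setminus K$ from $0$ to infinity can be thickened and rationalized), and the fact that $\overline{D(0,\rho)}\cup K_m$ still has connected complement, which holds because every point of $(\mathbb C\setminus K_m)\setminus\overline{D(0,\rho)}$ can be joined inside that set to the annulus $\{\rho<|z|<r'\}$. What the constructive approach buys is an explicit series and no appeal to completeness of the ambient space; what your approach buys is the stronger genericity conclusion and a shorter, more flexible argument --- at the cost of invoking Mergelyan (Runge on the sets $K_m$, which have empty interior, would actually suffice throughout) and of the bookkeeping needed to verify that the relevant compacta have connected complement at each step.
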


The result was proved in the case $r=0$ by Seleznev \cite{11} and
in the general case by Luh \cite{2} and Chui and Parnes \cite{1}. Several extensions and
generalisations have been considered \cite{3},\cite{4},\cite{6},\cite{7},\cite{8},\cite{9}.
Among them, of particular interest to us is the following. 

\begin{definition}\label{universal}
The set $U(\Omega)$ is the set of all functions $f\in \mathcal{H}(\Omega)$ with the property
that, for every compact set $K \subset \mathbb{C}$, 
$K \cap \Omega=\emptyset$, with $K^{\mathsf{c}}$
connected, and every function $h$ which is continuous on $K$ and holomorphic in the interior of $K$, 
there exists a sequence $\{\lambda_n\}\in\{0, 1, 2, \dots\}$ such that, for every compact set 
$L\subset \Omega$,
$$\sup_{\zeta\in L}\sup_{z\in K}|S_{\lambda_n}(f, \zeta)(z) - h(z)| \rightarrow 0,
\hspace{10pt} n \rightarrow \infty.$$
\end{definition}
\noindent
In \cite{9} it was proven that the 
set $U(\Omega)$ is a dense and $G_\delta$ subset of $\mathcal{H}(\Omega)$.

Let us now consider a simply connected domain $\Omega$ that does not contain $0$. Contrary to the functions of $U(\Omega)$,  fix $z=0$ and let 
$\zeta$ vary in $\Omega$. Define $$T_n(f)(z)=\sum_{n=0}^N\dfrac{f^{(n)}(z)}{n!} (-z)^n.$$ In \cite{12} V. Nestoridis conjectured 
that if $S(\Omega)$ is the of all functions $f\in \mathcal{H}(\Omega)$ with the property that the set 
$\{T_N(f) : N = 0,1,2,\dots \}$ is dense in $\mathcal{H}(\Omega)$, then $S(\Omega)$ is a dense $G_\delta$ set in $\mathcal{H}(\Omega)$. As noted 
in \cite{12}, if $f$ belongs 
to $U(\Omega)$ and we select $K=\{0\}$ in Definition \ref{universal}, then there exists a function in $\mathcal{H}(\Omega)$ with the property that the set 
$\{T_N(f) : N = 0,1,2,\dots \}$ approximates the constant functions in $\Omega$. The last statement is clearly evidence for the truth of the conjecture.

In the present paper, we answer the conjecture of V. Nestoridis in the affirmative in the special case where $\Omega$ is 
an open disc $D(z_0,r)$ that does not contain $0$. We prove that the set of holomorphic function $f$ 
defined on $D(z_0,r)$ such that $\{T_n(f)\}$ 
approximates every function of the form $P\circ \log$, where $P$ is a polynomial with coefficients in 
$\mathbb{Q}+i\mathbb{Q}$ and $\log$ is a logarithm defined on $D(z_0,r)$, is a dense $G_\delta$ set in 
$\mathcal{H}(D(z_0,r))$. This set coincides with the original one of the conjecture. In order to do so, we construct a function $g\in \mathcal{H}(D(z_0,r))$ having Ostrowski-gaps, such that the set $\{T_N(f) : N = 0,1,2,\dots \}$ 
approximates a function of the form $P\circ \log$ for a fixed polynomial $P$ and
use Baire's category theorem to prove the aforementioned special case of the conjecture. 

\section{Main result}

Consider a complex number $z_0$ and an open disc $D(z_0,r)=\{z\in\mathbb{C}, |z-z_0|<r\}$, $r>0$ such that $0\not\in D(z_0,r)$. We will denote $T^{'}_n(f)(z)$ the derivative of $T_n(f)(z)$ with respect to the centre of expansion $z$. Let us start by making some useful observations, which lay the foundations for what we consider as our main result. Their proofs are simple exercises and they are omitted.

\begin{lemma}\label{derivative}
Let $\Omega$ be an open subset of $\mathbb{C}$ and $f$ a holomorphic function on $\Omega$. Then 
$$T^{'}_n(f)(z)=\dfrac{f^{(n+1)}(z)}{n!}(-z)^n.$$
\end{lemma}

\begin{lemma}\label{zf(z)}
Let $\Omega$ be an open subset of $\mathbb{C}$, $f$ a holomorphic function on 
$\Omega$ and $g(z)=z f(z)$. 
Then
$$T_n(g)(z)=z\dfrac{f^{(n)}(z)}{n!}(-z)^n.$$
\end{lemma}

A key part in our proof of the main result will be the construction of functions that satisfy certain properties. First however, let us give a definition.

\begin{definition}\label{Ostrowski-gaps}
Let $\sum_{n=0}^\infty a_n(z-z_0)^n$ be a power series with radius of
convergence $R\in (0,\infty)$, and let $(p_n)_{n\in\mathbb{N}}$ and $(q_n)_{n\in
\mathbb{N}}$ be sequences of natural numbers with $1\leq p_1 < q_1\leq p_2 < q_2 \leq \dots$. We say that 
the power series or the sequence of partial sums has Ostrowski-gaps
$$(p_n, q_n)_{n\in \mathbb{N}}$$
if $q_n/p_n \rightarrow \infty$ for $n\rightarrow \infty$, and if for $I =
\cup_{n=1}^\infty [p_n + 1, q_n]$ we have $\Lim{k \rightarrow \infty,k\in I} |a_k|^{1/k} = 0.$
\end{definition}

For our purposes we need two sequences defined slightly differently from the sequences $(p_n)_{n\in\mathbb{N}}$ and $(q_n)_{n\in
\mathbb{N}}$  in Definition \ref{Ostrowski-gaps}. More precisely, we need two sequences of natural numbers $(p_n)_{n\in\mathbb{N}}$ and $(q_n)_{n\in
\mathbb{N}}$ with $q_n/p_n \rightarrow \infty$ for $n\rightarrow \infty$, $1\leq p_1 < q_1+1< p_2 < q_2+1 
< \dots$ and $p_{n+1}-q_n \rightarrow \infty$. We fix two such sequences.

Suppose now that we are given a sequence $(c_n)_{n\in\mathbb{N}}$ of complex numbers with the property
\begin{equation}\label{limsup}
\limsup_{n\rightarrow\infty} |c_n|^{1/n} \leq \dfrac{|z_0|}{r}.
\end{equation}
Note that $\dfrac{|z_0|}{r}\geq 1$, since $0\not\in D(z_0,r)$. Hence, there are plenty of such sequences $c_n$, for example the constant sequences.
Given such a sequence, we will construct a power series with Ostrowski-gaps 
$(p_n,q_n)_{n\in\mathbb{N}}$ and radius of convergence     
$ R\geq r$, such that 
\begin{equation}\label{construction}
T_{p_n}(g)-c_{p_n}\rightarrow 0
\end{equation}
uniformly on the compact subsets of $D(z_0,R)$.
To this end, let us define the Taylor series coefficients
\[a_k= \begin{cases} 
      \dfrac{c_{p_n}}{(-z_0)^{p_n}} & k=p_n, n\in\mathbb{N} \\
      \dfrac{-c_{p_n}}{(-z_0)^{q_n+1}} & k=q_n+1, n\in\mathbb{N} \\
      0 & otherwise
   \end{cases}
\]
and 
\begin{equation*}
g(z)=\sum_{k=1}^\infty a_k (z-z_0)^k.
\end{equation*}
By the definition of $a_k$ and \eqref{limsup} we have that 
\begin{equation*}
\limsup_{k\rightarrow\infty} |a_k|^{1/k}\leq \dfrac{1}{r}
\end{equation*}
and thus the radius of convergence, $R$, of the power series $\sum_{k=1}^\infty a_k (z-z_0)^k$  is
\begin{equation*}
R =\dfrac{1}{\limsup_{k\rightarrow\infty} |a_k|^{1/k}}\geq r.
\end{equation*}
Also, notice that $T_{p_n}(g)(z_0)=c_{p_n}$, $T_{k}(g)(z_0)=0$ for $k=q_n+1,\dots, p_{n+1}-1$ 
and that $g$ has Ostrowski-gaps $(p_n,q_n)_{n\in\mathbb{N}}$.

In \cite{7} (Lemma 9.2) the authors prove the following result, following the proof of Theorem 1 in \cite{5}.

\begin{lemma}\label{Melas Nestoridis}
Let $f(z)=\sum_{\nu=0}^\infty a_\nu(z-z_0)^\nu$ be the Taylor development of
a holomorphic function on an open set $\Omega$ around the point $z_0$. Suppose
that this series has Ostrowski gaps $(p_n ,q_n)_{n\in\mathbb{N}}$.
Then the difference $S_{p_n}( f,\zeta)(w)-S_{p_n}(f,z_0)(w)$ converges to $0$ (as $n\rightarrow\infty$)
uniformly on the compact subsets of $\Omega\times \mathbb{C}$.
\end{lemma}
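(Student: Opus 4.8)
The plan is to split $f$, for each $n$, according to the gap --- into a low--degree polynomial part, a part supported inside the gap, and a high--degree tail --- and to treat these three pieces by three different mechanisms: an exact cancellation, the smallness of the coefficients in the gap, and the length of the gap. Write $f=A_n+B_n+C_n$ with
\[
A_n(z)=\sum_{\nu=0}^{p_n}a_\nu(z-z_0)^\nu,\quad
B_n(z)=\sum_{\nu=p_n+1}^{q_n}a_\nu(z-z_0)^\nu,\quad
C_n(z)=\sum_{\nu=q_n+1}^{\infty}a_\nu(z-z_0)^\nu .
\]
Here $A_n,B_n$ are polynomials and $C_n=f-(A_n+B_n)$ is holomorphic on all of $\Omega$. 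Since the $p_n$-th Taylor polynomial of a polynomial of degree at most $p_n$ about an arbitrary centre is that polynomial itself, and $f\mapsto S_{p_n}(f,\zeta)(w)$ is linear, we obtain $S_{p_n}(f,\zeta)(w)-S_{p_n}(f,z_0)(w)=S_{p_n}(B_n,\zeta)(w)+S_{p_n}(C_n,\zeta)(w)$, so it suffices to show each summand tends to $0$ uniformly for $\zeta$ in a fixed compact set $L\subset\Omega$ and $w$ in a fixed compact set $K\subset\mathbb{C}$. Fix such $L,K$, put $D=\max_{\zeta\in L}|\zeta-z_0|$ and $M=\max_{\zeta\in L,\,w\in K}|w-\zeta|$, and choose $\sigma>0$ so small that $L'=\{\xi:\mathrm{dist}(\xi,L)\le\sigma\}$ is a compact subset of $\Omega$.

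For the gap term, observe first that the Ostrowski--gap hypothesis forces $\varepsilon_n:=\sup_{p_n<\nu\le q_n}|a_\nu|^{1/\nu}\to 0$, because $p_n\to\infty$ and $|a_k|^{1/k}\to 0$ along $k\in\bigcup_m[p_m+1,q_m]$. As $B_n$ is a polynomial, Cauchy's inequalities may be applied on the large circle $|\xi-\zeta|=2M+1$: for $0\le k\le p_n$ and all $n$ large enough that $\varepsilon_n(2M+1+D)\le\tfrac12$,
\[
\Big|\tfrac{B_n^{(k)}(\zeta)}{k!}\Big|\le(2M+1)^{-k}\max_{|\xi-\zeta|=2M+1}|B_n(\xi)|
\le(2M+1)^{-k}\sum_{\nu=p_n+1}^{q_n}\big(\varepsilon_n(2M+1+D)\big)^{\nu}\le(2M+1)^{-k}\,2^{-p_n}.
\]
Hence, using $|w-\zeta|\le M<\tfrac12(2M+1)$,
\[
|S_{p_n}(B_n,\zeta)(w)|\le 2^{-p_n}\sum_{k=0}^{p_n}\Big(\tfrac{M}{2M+1}\Big)^{k}\le 2^{-p_n}\sum_{k=0}^{p_n}2^{-k}\le 2^{\,1-p_n}\longrightarrow 0,
\]
uniformly on $L\times K$.

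For the tail term, Cauchy's inequalities on the small fixed circle $|\xi-\zeta|=\sigma$, whose closed disc lies in $\Omega$, give $|C_n^{(k)}(\zeta)/k!|\le\sigma^{-k}\mu_n$ for $0\le k\le p_n$, where $\mu_n:=\max_{\xi\in L'}|C_n(\xi)|=\max_{\xi\in L'}|f(\xi)-S_{q_n}(f,z_0)(\xi)|$. Assume $L'\subset D(z_0,R)$, where $R$ is the radius of convergence of the series; this holds whenever $\Omega$ is a disc about $z_0$, as in the application of this paper, and in general it is precisely the content of Ostrowski's overconvergence theorem. Picking $\rho_1:=\max_{\xi\in L'}|\xi-z_0|<\rho_2<R$ and using $|a_\nu|\le\rho_2^{-\nu}$ for all large $\nu$, we get $\mu_n\le\sum_{\nu>q_n}(\rho_1/\rho_2)^{\nu}=c\,\theta^{q_n}$ with $\theta:=\rho_1/\rho_2\in(0,1)$ and $c:=(1-\theta)^{-1}$. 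Therefore
\[
|S_{p_n}(C_n,\zeta)(w)|\le c\,\theta^{q_n}\sum_{k=0}^{p_n}(M/\sigma)^{k}\le c\,(p_n+1)\,\theta^{q_n}\big(\max\{1,M/\sigma\}\big)^{p_n},
\]
which tends to $0$ because $q_n/p_n\to\infty$: the factor $\theta^{q_n}$, geometric in $q_n$, overwhelms both $\big(\max\{1,M/\sigma\}\big)^{p_n}$ and $p_n+1$. Combined with the previous estimate, this proves the lemma.

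\textbf{The main obstacle} is the tension between the two Cauchy radii. The gap polynomial $B_n$, being entire, may be estimated on a circle of radius $\sim M$ large enough to swamp the powers $(M/\sigma)^{p_n}$, so its tiny coefficients win outright. But $C_n$ is only holomorphic on $\Omega$, so it must be estimated on a fixed small circle of radius $\sigma$, which costs exactly the factor $\big(\max\{1,M/\sigma\}\big)^{p_n}$ that the length of the gap then has to defeat; this is why one cannot simply estimate $g_n=B_n+C_n$ on the $\sigma$-circle, for that would only give $\max_{L'}|g_n|\le c\,\theta^{p_n}$ and $\theta^{p_n}\big(\max\{1,M/\sigma\}\big)^{p_n}$ need not vanish. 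It is the jump from $\theta^{p_n}$ to $\theta^{q_n}$ --- equivalently, the hypothesis $q_n/p_n\to\infty$ --- that saves the day. The one remaining subtlety is the passage from $\mu_n\to 0$ to geometric decay $\mu_n\le c\,\theta^{q_n}$, which used $L'\subset D(z_0,R)$; for a disc $\Omega=D(z_0,r)$ with $r\le R$ this is automatic, whereas for a general $\Omega$ exceeding the disc of convergence it has to be supplied by the overconvergence of $(S_{q_n}(f,z_0))$ granted by the Ostrowski--gap hypothesis.
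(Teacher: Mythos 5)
The paper does not prove this lemma at all: it is quoted from Melas--Nestoridis \cite{7} (Lemma 9.2), whose proof in turn follows Luh \cite{5}. So your write-up is a genuinely independent, self-contained argument, and most of it is sound. The decomposition $f=A_n+B_n+C_n$, the exact cancellation of the $A_n$-part, the estimate $|S_{p_n}(B_n,\zeta)(w)|\le 2^{1-p_n}$ via Cauchy's inequalities on the large circle (legitimate because $B_n$ is entire), and the final balancing of $\theta^{q_n}$ against $(M/\sigma)^{p_n}$ using $q_n/p_n\to\infty$ are all correct.

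The gap is exactly where you flag it, and it is not repaired by the remark you attach to it. The bound $\mu_n=\max_{L'}|f-S_{q_n}(f,z_0)|\le c\,\theta^{q_n}$ with a fixed $\theta<1$ is obtained from the coefficient estimate $|a_\nu|\le\rho_2^{-\nu}$, and this requires $\rho_1=\max_{\xi\in L'}|\xi-z_0|<R$, i.e.\ $L'\subset D(z_0,R)$. For a general open set $\Omega$ this inclusion is a geometric condition that can simply fail ($\Omega$ may extend far beyond the disc of convergence), and no theorem can supply it; what Ostrowski-type overconvergence would supply on such an $L'$ is only $\mu_n\to 0$, which is useless against the factor $(M/\sigma)^{p_n}$ coming from the small Cauchy radius $\sigma$ forced on $C_n$. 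So as written your argument proves the lemma only under the extra hypothesis that the compact set $L$ (slightly thickened) lies inside $D(z_0,R)$. This happens to cover every application in this paper, where $\Omega=D(z_0,r)$ and the constructed series have radius of convergence $R\ge r$, so for the purposes of the paper your proof suffices; but it does not establish the statement in its stated generality. The cited proofs close this gap by a different mechanism: one shows that the Ostrowski-gap structure of the coefficients propagates from $z_0$ to expansions about nearby centres (with slightly shrunk gap intervals), and then reaches an arbitrary compact $L\subset\Omega$ by a chain/compactness argument, for instance by integrating the identity $\frac{\partial}{\partial\zeta}S_N(f,\zeta)(w)=\frac{f^{(N+1)}(\zeta)}{N!}(w-\zeta)^N$ (Lemma \ref{derivative} of this paper) along paths in $\Omega$. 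If you only intend to justify the lemma as it is used here, state the restricted hypothesis explicitly; otherwise you need the propagation step.
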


In particular, under the assumptions of Lemma \ref{Melas Nestoridis}  
$$S_{p_n}(f,\zeta)(z)-S_{p_n}(f,z_0)(z)$$ converges to $0$
uniformly on the compact subsets of $D(z_0,r)\times\{0\}$.
By definition of $T_{p_n}(g)$, we have
\begin{equation*}
T_{p_n}(g)(z)-c_{p_n}\rightarrow 0
\end{equation*}
uniformly on the compact subsets of $D(z_0,R)$ and especially on 
the compact subsets of $D(z_0,r)$.

The following result will be useful to us. If $g$ is a holomorhic function defined on a neighbourhood of $z_0$ and its Taylor expansion has radius of convergence $0<R<\infty$ at $z_0$, then
\begin{equation}\label{equation}
\limsup_{n\rightarrow\infty}(\max_{|z-z_0| 
\leq \rho R}|S_{n}(g,z_0)(z)|^{1/n})=\rho
\end{equation}
for every $\rho\geq 1$. This follows easily from the Cauchy estimates. Indeed,
for any arbitrary $\varepsilon>0$ we have for the Taylor coefficients, $a_n$, of
$g$ that $|a_n|\leq (1/R+\varepsilon)^n$ for every large enough natural number 
$n$. Hence, there is a constant $C>0$, such that 
$$|S_n(g,z_0)(z)|\leq \sum_{k=0}^n C(1/R+\varepsilon)^k(\rho R)^k=C\dfrac{
(\rho+\varepsilon\rho R)^{n+1}-1}{\rho+\varepsilon\rho R-1}$$
for every natural number $n$ and every $z$ such that $|z-z_0|\leq \rho R$.
The left hand side of \eqref{equation} is at most $\rho+\varepsilon \rho R$. Since, 
$\varepsilon>0$ was arbitrary it follows that the left hand side of \eqref{equation} is at most $\rho$. On the other hand,
if the left hand side were less than $\rho-\varepsilon$, then from the Cauchy estimates $$|a_n|=|S_n(g,z_0)^{(n)}(z_0)|\leq 1/(\rho R)^n \max_{|z-z_0| 
\leq \rho R}|S_{n}(g,z_0)(z)|,$$
we get that $$|a_n|\leq (1/R-\varepsilon/2R)^n$$ for every large enough 
$n$. However, this contradicts our assumption that the radius of convergence is 
exactly $R$.

In what follows we use Lemma \ref{derivative}, Lemma \ref{zf(z)}, the above construction and the aforementioned result to prove the existence of functions $f$ such that $\{T_n(f)\}$ approximates certain functions. 

\begin{theorem}
Let $P$ be a polynomial. Then there exists a function $Q_P\in \mathcal{H}(D(z_0,r))$ such that 
$$T_{p_n}(Q_P)(z)\rightarrow (P\circ \log)(z)$$
uniformly on the compact subsets of $D(z_0,r)$.
\end{theorem}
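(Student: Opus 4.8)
The plan is to reduce the statement to the monomials $w\mapsto w^m/m!$ and then induct on $m$. Since $0\notin D(z_0,r)$ and $D(z_0,r)$ is simply connected, fix once and for all a holomorphic branch $\log$ of the logarithm on $D(z_0,r)$. Writing $P(w)=\sum_{m=0}^{d}b_m w^m$ and using that $f\mapsto T_{p_n}(f)$ is $\mathbb{C}$-linear, it is enough to produce, for each integer $m\geq 0$, a function $Q_m\in\mathcal{H}(D(z_0,r))$ with $T_{p_n}(Q_m)(z)\to(\log z)^m/m!$ uniformly on compact subsets of $D(z_0,r)$, since then $Q_P:=\sum_{m=0}^{d}b_m\,m!\,Q_m$ does the job. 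For $m=0$ I would invoke the construction of a power series with Ostrowski-gaps recalled above with the constant sequence $c_n\equiv 1$, which is legitimate because $|z_0|/r\geq 1$; this yields $g\in\mathcal{H}(D(z_0,R))$, $R\geq r$, with $T_{p_n}(g)(z)-1\to 0$ uniformly on compacta of $D(z_0,r)$ (that is,~\eqref{construction}), and I set $Q_0:=g$. The engine of the induction is the identity $T_{p_n}\!\big(zf'(z)\big)(z)=z\,T'_{p_n}(f)(z)$, which follows from Lemma~\ref{zf(z)} applied to $f'$ together with Lemma~\ref{derivative}: passing to an antiderivative and multiplying by $z$ absorbs the factor $1/z$ that $\frac{d}{dz}\log z$ introduces.

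For the inductive step, assume $Q_{m-1}$ is at hand and put $h:=Q_{m-1}$. As $0\notin D(z_0,r)$, the function $\zeta\mapsto h(\zeta)/\zeta$ lies in $\mathcal{H}(D(z_0,r))$, so $f(z):=\int_{z_0}^{z}h(\zeta)/\zeta\,d\zeta$ defines $f\in\mathcal{H}(D(z_0,r))$ with $zf'(z)=h(z)$. By the identity above, $z\,T'_{p_n}(f)(z)=T_{p_n}(h)(z)\to(\log z)^{m-1}/(m-1)!$ uniformly on compacta, hence $T'_{p_n}(f)(z)\to\frac{1}{z}\cdot\frac{(\log z)^{m-1}}{(m-1)!}=\frac{d}{dz}\big((\log z)^m/m!\big)$ uniformly on compacta. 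Integrating along segments issuing from $z_0$ (uniform convergence of derivatives on compacta passes to antiderivatives, up to their values at $z_0$) gives
$$T_{p_n}(f)(z)-d_n\;\longrightarrow\;\frac{(\log z)^m-(\log z_0)^m}{m!}\qquad\text{uniformly on compacta of }D(z_0,r),$$
where $d_n:=T_{p_n}(f)(z_0)=S_{p_n}(f,z_0)(0)$. Since $f$ has radius of convergence $R_f\geq r$ at $z_0$ and $|0-z_0|=|z_0|\geq r$, estimate~\eqref{equation} (applied with $\rho=|z_0|/R_f\geq 1$ when $R_f\leq|z_0|$, the partial sums converging at $0$ otherwise) yields $\limsup_n|d_n|^{1/p_n}\leq|z_0|/r$. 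Hence the sequence $c_{p_n}:=(\log z_0)^m/m!-d_n$, with $c_k:=0$ for $k\notin\{p_n\}$, satisfies~\eqref{limsup}, and the Ostrowski-gap construction produces $g\in\mathcal{H}(D(z_0,r))$ with $T_{p_n}(g)(z)-c_{p_n}\to 0$ uniformly on compacta. Adding, the unknown constants cancel, and $Q_m:=f+g$ satisfies $T_{p_n}(Q_m)(z)\to(\log z)^m/m!$ uniformly on compacta of $D(z_0,r)$, which closes the induction.

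The step I expect to be the crux is precisely this cancellation of the constants $d_n$. The antiderivative operation $h\mapsto f$ determines $T_{p_n}(f)$ only modulo the numbers $d_n=T_{p_n}(f)(z_0)$, which in general do not converge, so one is forced to add a corrective summand coming from a series with Ostrowski-gaps; making such a summand admissible is exactly what requires the subexponential bound $\limsup_n|d_n|^{1/p_n}\leq|z_0|/r$, which in turn rests on~\eqref{equation} and on keeping track throughout that every function produced ($h$, $f$, $g$, and $Q_m$) has radius of convergence at least $r$ at $z_0$, so that all the operations stay inside $\mathcal{H}(D(z_0,r))$.
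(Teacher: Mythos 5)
Your proof is correct, and its skeleton is the paper's: reduce to the monomials $\log^m(z)/m!$ by linearity of $T_n$, settle $m=0$ with the Ostrowski-gap construction for $c_n\equiv 1$, and climb one degree at a time via the identity $T_n(zf')(z)=z\,T'_n(f)(z)$ (the paper's \eqref{combination}), integration from $z_0$, and a corrective gap series that absorbs the constants $d_n=T_{p_n}(f)(z_0)$. Where you genuinely diverge is in how you certify the admissibility bound $\limsup_n|d_n|^{1/p_n}\le|z_0|/r$, and the divergence buys a real simplification. The paper applies \eqref{equation} as an \emph{equality}, which requires the radius of convergence at $z_0$ to be exactly $r$; to arrange this it adds an auxiliary series $h$ whose coefficients pin the radius of $F+h$ down to $r$, and that in turn forces it to carry through the induction the property $T_k(\cdot)(z_0)=0$ for $k$ in the gap ranges --- whence the second auxiliary series $\psi$ at every stage. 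You observe instead that only the upper-bound half of \eqref{equation} is needed, and that it holds as soon as the radius is $\ge r$ (with the separate, easy case where the radius exceeds $|z_0|$, so the partial sums simply converge at $0$). Since every function you construct lies in $\mathcal{H}(D(z_0,r))$ and therefore automatically has radius $\ge r$ at $z_0$, your inductive hypothesis need only record $Q_{m-1}\in\mathcal{H}(D(z_0,r))$ and $T_{p_n}(Q_{m-1})\to\log^{m-1}(z)/(m-1)!$; the functions $h$ and $\psi$ and the gap-vanishing bookkeeping disappear, and the inductive step becomes genuinely self-contained. All the individual steps check out (the identity via Lemmas \ref{derivative} and \ref{zf(z)} applied to $f'$, the division by $z$ being harmless on compacta since $0\notin D(z_0,r)$, the integration along segments in the convex disc, and the estimate $\limsup_n|c_{p_n}|^{1/p_n}\le\max\{1,|z_0|/r\}=|z_0|/r$ for the corrective sequence), so I see no gap --- only a leaner route to the same theorem.
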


\begin{proof}
Let $d$ be the degree of $P$. Without loss of generality we can assume that $d\geq 1$. To prove the statement of the Theorem, we will at first approximate functions of the form $Q\circ \log$, where $Q$ is a polynomial of lower degree. 

We will start from the constant polynomial $Q\equiv 1$. Choosing $(c_n)_{n\in\mathbb{N}}$ to be the constant sequence equal to $1$, we can construct a 
function $g$ such that $T_{p_n}(g)\rightarrow 1 $ uniformly on the compact subsets of $D(z_0,r)$. Let us define $f(z)=g(z)/z$, which is a holomorphic function on $D(z_0,r)$, because $0\not\in D(z_0,r)$. Consider an antiderivative, $F$, of $f$.
Combining Lemma \ref{derivative} and Lemma \ref{zf(z)}, we have
\begin{equation}\label{combination}
T_n(g)(z)=z\dfrac{f^{(n)}(z)}{n!}(-z)^n=z\dfrac{F^{(n+1)}(z)}{n!}(-z)^n=zT^{'}_n(F)(z).
\end{equation}
Since $T_{p_n}(g)\rightarrow 1 $ uniformly on the compact subsets of $D(z_0,r)$, we conclude that
\begin{equation}\label{1/z}
T^{'}_{p_n}(F)(z)\rightarrow 1/z
\end{equation}
uniformly on the compact subsets of $D(z_0,r)$.
It follows by integration that
\begin{equation*}
T_{p_n}(F)(z)-T_{p_n}(F)(z_0)-\log(z)+\log(z_0)\rightarrow 0
\end{equation*}
uniformly on the compact subsets of $D(z_0,r)$.
Our goal is to find a function $\phi\in \mathcal{H}(D(z_0,r)$, such that
\begin{equation*}
T_{p_n}(\phi)(z)-T_{p_n}(F)(z_0)+\log(z_0)\rightarrow 0
\end{equation*}
uniformly on the compact subsets of $D(z_0,r)$. First, we need to prove that
\begin{equation*}
\limsup_{n\rightarrow\infty} |T_n(F)(z_0)|^{1/n}\leq|z_0|/r.
\end{equation*}
Since $p_{n+1}-q_{n}\rightarrow\infty$, there is $k_0\in\mathbb{N}$ such that 
$p_{n+1}-q_{n}> 3$ for every $n\geq k_0$.
Define the function $h$ to be the power series $\sum_{k=0}^\infty a_k(h)(z-z_0)^k$,
where
\[a_k(h)= \begin{cases} 
      \dfrac{1}{r^{q_n+2}} & k=q_n+2, n\geq k_0 \\
      \dfrac{1}{z_0r^{q_n+2}} & k=q_n+3, n\geq k_0 \\
      0 & otherwise.
   \end{cases}
\]
Then $h$ is a holomorphic function on $D(z_0,r)$ and $T_{p_n}(h)(z_0)=0$. Also, the radius of convergence of the function $F+h$ is at least $r$, since it is holomorphic on $D(z_0,r)$.

Moreover $T_{k}(g)(z_0)=0$ for 
$k=q_n+1,\dots,p_{n+1}-1$, which combined with \eqref{combination} implies for the Taylor coefficients of $F$ at 
$z_0$ that
\begin{equation*}
a_k(F)=\dfrac{F^{(k)}(z_0)}{k!}=0, k=q_n+2,\dots, p_{n+1}.
\end{equation*}
Thus, for the Taylor coefficients of $F+h$ at $z_0$ we have $a_{q_n+2}(F+h)=\dfrac{1}{r^{q_n+2}}$ for 
$n\geq k_0$, which implies that the radius of convergence of the power series of $F+h$ is exactly $r$.
Now, we notice that $\rho=\dfrac{|z_0|}{r}\geq 1$ and hence
\begin{equation*}
\limsup_{n\rightarrow\infty} (\max_{|z-z_0| 
\leq \rho r}|S_{n}(F+h,z_0)(z)|^{1/n})=\rho.
\end{equation*}
Obviously, $0\in \overline{D(z_0,\rho r)}$ and thus
\begin{equation}\label{pastitsio}
\limsup_{n\rightarrow\infty} |T_n(F)(z_0)|^{1/n}=\limsup_{n\rightarrow\infty} |T_n(F+h)(z_0)|^{1/n}\leq\rho=|z_0|/r.
\end{equation}
Also, since 
\begin{equation*}
\limsup_{n\rightarrow\infty} |-\log(z_0)|^{1/n}=1\leq|z_0|/r
\end{equation*} 
we can construct a holomorphic 
function $\phi$ defined on $D(z_0,r)$, such that
\begin{equation*}
T_{p_n}(\phi)(z)-T_{p_n}(F)(z_0)+\log(z_0)\rightarrow 0
\end{equation*}
uniformly on the compact subsets of $D(z_0,r)$. Therefore,
\begin{equation}\label{log}
T_{p_n}(F+\phi)(z)\rightarrow \log(z).
\end{equation}
If $d=1$, then combining \eqref{log} with the fact that $T_{p_n}(g)\rightarrow 1$ and the linearity of $T_k$ we can stop our proof here and conclude. Otherwise, 
our next step is going to be the approximation of $\dfrac{\log^2(z)}{2}$. This can be done using the above arguments. However, we need to define a function 
$\psi$, such that $F+\phi+\psi$ has the additional property that
\begin{equation*}
T_k(F+\phi+\psi)(z_0)=0
\end{equation*}
for $k=q_n+3,\dots, p_{n+1}-2$, for every large enough $n$. 
In order to do so, consider at first a natural 
number $k_1$ such that $p_{n+1}-q_n>4$ for every $n\geq k_1$. Now, define the power 
series $\psi(z)=\sum_{k=0}^\infty a_k(\psi)(z-z_0)^k$,
where
\[a_k(\psi)= \begin{cases} 
      \dfrac{-T_{q_n+3}(F+\phi)(z_0)}{(-z_0)^{q_n+3}} & k=q_n+3, n\geq k_1 \\
      \dfrac{T_{q_n+3}(F+\phi)(z_0)}{(-z_0)^{p_{n+1}-1}} & k=p_{n+1}-1, n\geq k_1 \\
      0 & otherwise.
   \end{cases}
\]
It is easy to see that $\psi$ is a holomorphic function on $D(z_0,r)$. Indeed, notice that the radius of convergence of the power series of $F+h+\phi$ is exactly $r$. Thus, similarly to \eqref{pastitsio}
\begin{equation*}
\limsup_{n\rightarrow\infty} |T_n(F+\phi)(z_0)|^{1/n}\leq\rho=|z_0|/r.
\end{equation*}
and hence the radius of convergence of $\psi$ is greater than or equal to $r$.

Also, 
\begin{equation*}
T_{p_n}(\psi)(z_0)=0
\end{equation*}
and, using again Lemma \ref{Melas Nestoridis}, we have that
\begin{equation*}
T_{p_n}(\psi)(z)\rightarrow 0
\end{equation*}
uniformly on the compact subsets of $D(z_0,r)$.
Thus, 
\begin{equation*}
T_{p_n}(F+\phi+\psi)(z)\rightarrow \log(z)
\end{equation*}
and 
\begin{equation*}
T_k(F+\phi+\psi)(z_0)=0
\end{equation*}
for $k=q_n+3,\dots, p_{n+1}-2$, where $n\geq k_1$.
Using these two properties, we can proceed by repeating the above arguments for 
$F+\phi+\psi$ in place of $g$, in order to approximate the antiderivative of $\log(z)/z$, which is 
$\log^2(z)/2$. 

For the general case, since the antiderivative of $\log^{k-1}(z)/z$ is $\log^k(z)/k$, we can proceed 
inductively to find a function $Q_k\in \mathcal{H}(D(z_0,r))$ such that $T_{p_n}(Q_k)$ approximates $\log^k(z)/k!$. From the 
linearity of the operators $T_n$, there is a function $Q_P\in \mathcal{H}(D(z_0,r))$ such that 
\begin{equation*}
T_{p_n}(Q_P)(z)\rightarrow (P\circ \log)(z)
\end{equation*}
uniformly on the compact subsets of $D(z_0,r)$.
\end{proof}

We are now ready to prove our main result.

\begin{theorem}
The set $S(D(z_0),r))$ of all functions $f\in \mathcal{H}(D(z_0),r))$ with the property that the set $\{T_N(f) : N = 0, 1, 
2, \dots \}$ is dense in $\mathcal{H}(D(z_0),r))$ is a dense $G_\delta$ set in $\mathcal{H}(D(z_0),r))$
\end{theorem}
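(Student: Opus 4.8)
The plan is to combine the previous theorem with Baire's category theorem, exploiting the fact that $\{T_N(f)\}$ is a countable family of continuous linear operators on $\mathcal{H}(D(z_0,r))$ and that this space is a separable Fr\'echet space, hence a Baire space in which the dense $G_\delta$ sets are the relevant ``generic'' sets. The first step is to reduce density in $\mathcal{H}(D(z_0,r))$ to approximation of a countable dense subset of target functions: by Runge's theorem (or Mergelyan on the closed disc, or simply Taylor polynomials), the polynomials with coefficients in $\mathbb{Q}+i\mathbb{Q}$ are dense in $\mathcal{H}(D(z_0,r))$; since $\log$ is a biholomorphism of $D(z_0,r)$ onto a simply connected region avoiding branch issues (as $0\notin D(z_0,r)$, a holomorphic branch of $\log$ exists), and a polynomial in $\log$ composed with $\exp$ recovers polynomials, one checks that the set $\{P\circ\log : P \text{ a polynomial with coefficients in } \mathbb{Q}+i\mathbb{Q}\}$ is also dense in $\mathcal{H}(D(z_0,r))$. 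Call this countable set $\{h_j : j\in\mathbb{N}\}$.

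Next I would write $S(D(z_0,r))$ as a countable intersection. Fix an exhausting sequence of compact sets $K_m\subset D(z_0,r)$ with $K_m\subset \mathrm{int}(K_{m+1})$ and $\bigcup_m K_m = D(z_0,r)$. For $j,m,s\in\mathbb{N}$ define
\begin{equation*}
E(j,m,s)=\left\{f\in\mathcal{H}(D(z_0,r)) : \exists\, N \text{ with } \sup_{z\in K_m}|T_N(f)(z)-h_j(z)|<\tfrac{1}{s}\right\}.
\end{equation*}
Each $E(j,m,s)$ is open because $f\mapsto T_N(f)$ is continuous for each fixed $N$ (the coefficients of $T_N(f)$ depend continuously, indeed by Cauchy's estimates, on $f$ in the topology of local uniform convergence), so $E(j,m,s)$ is a union of open sets. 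One then verifies the standard identity $S(D(z_0,r)) = \bigcap_{j,m,s} E(j,m,s)$: membership of $f$ in every $E(j,m,s)$ says exactly that every $h_j$ can be approximated arbitrarily well by some $T_N(f)$ on every $K_m$, which by density of $\{h_j\}$ and the definition of the compact-open topology is equivalent to $\{T_N(f)\}$ being dense in $\mathcal{H}(D(z_0,r))$.

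The main point — and the only place the previous theorem enters — is to show each $E(j,m,s)$ is dense. Fix $j,m,s$ and an arbitrary $f_0\in\mathcal{H}(D(z_0,r))$, a compact $L\subset D(z_0,r)$, and $\varepsilon>0$; I must produce $f\in E(j,m,s)$ with $\sup_L|f-f_0|<\varepsilon$. Here is where I would be slightly careful: the target $h_j = P_j\circ\log$ is of the required form, so by the previous theorem there is $Q_{P_j}\in\mathcal{H}(D(z_0,r))$ with $T_{p_n}(Q_{P_j})\to P_j\circ\log$ uniformly on compacta. But one cannot simply take $f=Q_{P_j}$, since that need not be close to $f_0$. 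Instead one perturbs: set $f = f_0 + Q$ where $Q$ is built to both approximate $P_j\circ\log$ through its $T_{p_n}$ and to contribute negligibly to $f$ on $L$. The natural device — and this is exactly the mechanism already used repeatedly in the proof of the previous theorem — is that the construction of $Q_{P_j}$ proceeds by adding Ostrowski-gap series supported on very sparse index sets; by Lemma \ref{Melas Nestoridis} such additions change $S_{p_n}(\cdot,z_0)$ (hence $T_{p_n}$) negligibly, while, conversely, one can absorb $f_0$ by noting that the partial sums $T_N(f_0)(z) = S_N(f_0,z)(z)\cdot$(corrections) are themselves controlled, and choosing the gaps of $Q$ to lie past where $f_0$'s relevant behaviour sits. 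Concretely: choose the function $Q$ with Ostrowski-gaps $(p_n,q_n)$ so small in sup-norm on $L$ that $\sup_L|Q|<\varepsilon$ (possible since we are free to scale the $c_n$-type data and the radius $R\ge r$ construction only requires the $\limsup$ bound, which scaling respects), while simultaneously $T_{p_n}(f_0+Q) = T_{p_n}(f_0) + T_{p_n}(Q)$, and one arranges via the gap structure and Lemma \ref{Melas Nestoridis} that $T_{p_n}(f_0)\to$ some harmless limit that the final linear combination can correct, so that $T_{p_n}(f_0+Q)\to P_j\circ\log$ on compacta, in particular within $1/s$ on $K_m$ for large $n$. I expect this density step to be the main obstacle, specifically the bookkeeping needed to run the previous theorem's inductive construction with $f_0$ present as a passenger; the cleanest route is probably to observe that the previous theorem's proof already shows $\{T_{p_n}(\cdot)\}$ achieves the target $P_j\circ\log$ for the specific function it builds, and then to rerun that same construction with the perturbation $f_0$ carried along — its $T_{p_n}$-values have $\limsup|T_{p_n}(f_0)(z_0)|^{1/p_n}\le|z_0|/r$ by the Cauchy-estimate bound \eqref{equation}, so it can be cancelled by the same $\phi$-type correction used in the earlier argument.

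Once density of each $E(j,m,s)$ is established, Baire's theorem for the Fr\'echet space $\mathcal{H}(D(z_0,r))$ gives that $S(D(z_0,r)) = \bigcap_{j,m,s}E(j,m,s)$ is a dense $G_\delta$, which is the claim.
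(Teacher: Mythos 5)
Your global architecture (countable family of targets $P_j\circ\log$, open sets indexed by target, compact set and accuracy, the Runge argument via $f\circ\exp$ showing these targets are dense, openness from continuity of each $T_N$, then Baire) is exactly the paper's. The gap is in the one substantive step, the density of $E(j,m,s)$, which you correctly identify as the main obstacle but do not actually close. Your plan is to write $f=f_0+Q$ with $Q$ small on $L$ and to ``carry $f_0$ along as a passenger,'' cancelling its contribution by a $\phi$-type correction because $\limsup_n|T_{p_n}(f_0)(z_0)|^{1/p_n}\le|z_0|/r$. This does not work for an arbitrary $f_0$: the correction $\phi$ built in the previous theorem cancels a sequence of \emph{constants}, namely the values $T_{p_n}(\cdot)(z_0)$, and the passage from the constant $T_{p_n}(f)(z_0)$ to the function $z\mapsto T_{p_n}(f)(z)$ is exactly what Lemma \ref{Melas Nestoridis} provides --- and that lemma requires $f$ to have Ostrowski gaps. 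A generic $f_0\in\mathcal{H}(D(z_0,r))$ has no Ostrowski gaps, so $T_{p_n}(f_0)(z)-T_{p_n}(f_0)(z_0)$ need not tend to $0$, and indeed $T_{p_n}(f_0)(z)$ need not converge to anything as a function of $z$; controlling its value at the single point $z_0$ is not enough. Separately, your claim that one can make $\sup_L|Q|<\varepsilon$ ``by scaling the $c_n$'' is inconsistent with keeping $T_{p_n}(Q)\to P_j\circ\log$, since scaling the data scales the limit as well.

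The paper's device, which is what is missing from your sketch, avoids ever having to control $T_{p_n}(f_0)$ for general $f_0$. Given $g$, one takes the function $h$ from the previous theorem with $T_{p_n}(h)\to P_j\circ\log$ and then uses Runge to approximate the \emph{difference} $g-h$ on a large disc $D_N$ by a polynomial $P$ which in addition satisfies $|P(0)|<1/2s$ (possible because $0\notin D_N$, so one may prescribe the value $0$ at the extra point $0$). The candidate is $f=h+P$: it is within $\varepsilon$ of $g$ on $K\subset D_N$, and since $T_n(P)\equiv P(0)$ once $n$ exceeds $\deg P$ (the $n$-th Taylor polynomial of $P$ centred at $z$, evaluated at $0$, is $P(0)$ exactly), one gets $T_{p_{n_0}}(h+P)=T_{p_{n_0}}(h)+P(0)$, which is within $1/s$ of $P_j\circ\log$ on $D_m$. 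The point is that the perturbation needed to approach $g$ is a \emph{polynomial}, whose $T_n$ is eventually a small constant --- no Ostrowski-gap analysis of $g$ or $f_0$ is required. Without this (or an equivalent) idea, your density argument does not go through.
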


\begin{proof}
We will start by providing a convenient set expression for $S(D(z_0),r))$. Let $D_m$, $m\geq 1$ be the closed disc $\overline{D(z_0,r(1-1/m))}$ and $(P_j)_{n\in\mathbb{N}}$ an enumeration
of the polynomials with coefficients in $\mathbb{Q}+i\mathbb{Q}$. Define
$$A(m,j,s)=\bigcup_{n=0}^\infty \{f\in D(z_0,r): \sup_{z\in D_m} |T_n(f)(z)-(P_j\circ \log)(z)|<1/s\}.$$
We claim that the set
$$A=\bigcap_{m=1}^\infty\bigcap_{j=1}^\infty\bigcap_{s=1}^\infty A(m,j,s)$$ 
coincides with the desired one.

Indeed, $S(D(z_0,r))$ is clearly a subset of $A$. It remains to prove that $A$ is a subset of $S(D(z_0,r))$. Let $g\in A$, $f\in \mathcal{H}(D(z_0,r))$ and $K$ be a compact subset of $D(z_0,r)$. It is easy to see that there is a natural 
number $N$ such that $K\subset D_N$. Since $D(z_0,r)$ is a simply connected open set and $\log(z)$ is a 
conformal mapping, it is well known that $\Omega=\log(D(z_0,r))$ is a simply connected open set. Also, the 
function $f\circ \exp$, where $\exp$ is the exponential function, is a holomorphic function on $\Omega$. By 
Runge's approximation theorem \cite{10}, there is a sequence of polynomials $(Q_n)_{n\in\mathbb{N}}$ with 
coefficients in $\mathbb{Q}+i\mathbb{Q}$ that converges to $f\circ \exp$ uniformly on the compact 
subsets of $\Omega$. Then $(Q_n\circ \log)_{n\in\mathbb{N}}$ converges to $f$ uniformly on the 
compact subsets of $D(z_0,r)$. Now, pick a sequence of natural numbers 
$(k_n)_{n\in\mathbb{N}}$, such that 
\begin{equation*}
\sup_{z\in D_n}|T_{k_n}(g)(z)-(Q_n\circ \log)(z)|\leq 1/n.
\end{equation*}
Since $(Q_n\circ \log)_{n\in\mathbb{N}}$ converges to $f$ uniformly on $D_N$ 
and $K\subset D_N$, we conclude that $T_{k_n}(g)(z)$ converges to $f$ uniformly on $K$. The compact set $K\subset D(z_0,r)$ was arbitrary. Thus, $T_{k_n}(g)(z)$ converges to $f$ uniformly on the compact subsets of $D(z_0,r)$.
It follows that $A$ is a subset of $S(D(z_0,r))$.

We will now prove that $A(m,j,s)$ is dense. To this end, let $g\in \mathcal{H}(D(z_0,r))$, $K$ be a compact subset of $D(z_0,r)$ and $\varepsilon>0$. Consider also a function $h$ such 
that $T_{p_n}(h)(z)\rightarrow (P_j\circ \log)(z)$ uniformly on the compact subsets of $D(z_0,r)$ and $N$ large enough such that $K\subset D_N$. From 
Runge's approximation theorem there exists a polynomial $P$ such that 
\begin{equation}\label{almost there}
\sup_{z\in D_N}|g(z)-h(z)-P(z)|<\varepsilon
\end{equation}
and
\begin{equation*}
|P(0)|<1/2s.
\end{equation*}
Notice that if $n$ is greater than the degree of $P$, then $T_n(P)(z)=P(0)$. If we pick now a natural 
number $n_0$ such that $p_{n_0}$ is greater than the degree of $P$ and 
\begin{equation*}
\sup_{z\in D_m}|T_{p_{n_0}}(h)(z)-(P_j\circ \log)(z)|<1/2s,
\end{equation*}
then using the triangle inequality we have
\begin{equation*}
\sup_{z\in D_m}|T_{p_{n_0}}(h+P)(z)-(P_j\circ \log)(z)|<1/s.
\end{equation*}
Thus, $h+P$ belongs to $A(m,j,s)$ which combined with the fact that $K\subset D_N$ and \eqref{almost there} implies that $A(m,j,s)$ is dense. 

Finally, $A(m,j,s)$ is an open set, since $T_n$ is a continuous operator. Baire's category theorem implies that $A$ is a dense 
$G_\delta$ subset of 
$\mathcal{H}(D(z_0,r))$. This completes the proof.
\end{proof}

\vspace{3pt}
\textbf{\textit{Acknowledgement}}---
I would like to thank V. Nestoridis, E. Archer and G. Vasdekis for taking interest in this work.


\begin{thebibliography}{20}

\bibitem{1}
C.K~Chui, M.N.~Parnes.
\newblock Approximation by overconvergence of a power series.
\newblock {\em J. Math. Anal. Appl., 36 (1971), pp. 693-696}

\bibitem{2}
W.~Luh.
\newblock Approximation analytischer Funktionen durch {\"u}berconvergente Potenzreihen und 
deren Matrix-Transformierten.
\newblock {\em Mitt. Math. Sem. Giessen, 88 (1970), pp. 1-56}

\bibitem{3}
W.~Luh.
\newblock  {\"U}ber die Anwendung von {\"U}bersummierbarkeit zur Approximation regul{\"a}rer Funktionen.
\newblock {\em O. Lehto, I.S. Louhiraara, R. Nevanlinna (Eds.), Topics in Analysis (Colloq. Math. Anal., Jyv{\"a}skyl{\"a}, 1970), Springer, Berlin (1974), pp. 260-267}

\bibitem{4}
W.~Luh.
\newblock {\"U}ber den Satz von Mergelyan.
\newblock {\em J. Approx. Theory, 16 (1976), pp. 194-198}

\bibitem{5}
W.~Luh. 
\newblock Universal approximation properties of overconvergent power series on open sets.
\newblock {\em Analysis 6 (1986), 191-207.}

\bibitem{6}
A.~Melas, V.~Nestoridis.
\newblock On various types of universal Taylor series.
\newblock {\em Complex Variables, 44 (2001), pp. 245-258}

\bibitem{7}
A.~Melas, V.~Nestoridis.
\newblock Universality of Taylor series as a generic property of holomorphic
  functions.
\newblock {\em Advances in Mathematics, 157(2001)(no. 2):pages 138--176.}

\bibitem{8}
V.~Nestoridis.
\newblock Universal Taylor series.
\newblock {\em Ann. Inst. Fourier (Grenoble), 46 (1996), pp. 1293-1306}

\bibitem{9}
V.~Nestoridis.
\newblock An extension of the notion of universal Taylor series.
\newblock {\em CMFT 1997(Nicosia) 421-430. Ser. Approx. Decompos., II, World
  Sci. Publ. River Edge, NI. 1999}

\bibitem{10}
W.~Rudin.
\newblock Real and Complex Analysis 
\newblock{\em McGraw-Hill, New York (1987)}

\bibitem{11}
A.I.~Seleznev.
\newblock On universal power series.
\newblock{\em Mat. Sb. (N.S.), 28 (1951), pp. 453-460}
  
\bibitem{12}
M.~Siskaki.
\newblock Boundedness of derivatives and anti-derivatives of holomorphic functions as a rare phenomenon.
\newblock {\em Journal of Mathematical Analysis and Applications (to appear) }
\end{thebibliography}
\end{document}